\newfont{\bb}{msbm10 at 11pt}
\newfont{\bbsmall}{msbm8 at 8pt}
\def\R{\mathbb{R}}
\def\N{\mathbb{N}}
\def\Z{\mathbb{Z}}
\def\D{\mathbb{D}}
\def\a{{\alpha}}
\def\lc{{\cal L}}
\def\g{{\gamma}}
\def\l{{\lambda}}
\def\de{{\delta}}
\def\be{{\beta}}
\def\ve{{\varepsilon}}
\def\centerbmp#1#2#3{\vskip#2\relax\centerline{\hbox to#1{\special
    {bmp:#3 x=#1, y=#2}\hfil}}}
\newtheorem{theorem}{Theorem}
\newtheorem{remark}{Remark}
\newtheorem{corollary}{Corollary}
\newtheorem{definition}{Definition}
\newtheorem{assertion}{Assertion}
\newenvironment{proof}{\smallskip\noindent{\it Proof.}\hskip \labelsep}
                          {\hfill\penalty10000\raisebox{-.09em}{$\Box$}\par\medskip}
\begin{document}

\begin{title}
{Limit leaves of a CMC lamination are stable}
\end{title}
\begin{author}
{William H. Meeks III\thanks{This material is based upon work for the NSF under
Award No. DMS - 0405836. Any opinions, findings, and conclusions or recommendations
expressed in this publication are those of the authors and do not necessarily
reflect the views of the NSF.} \and Joaqu\'\i n P\' erez \and Antonio
Ros\thanks{Research partially supported by a MEC/FEDER grant no. MTM2007-61775. }, }
\end{author}
\maketitle
\begin{abstract}
Suppose ${\cal L}$ is a lamination of a Riemannian manifold by
hypersurfaces with the same constant mean curvature. We prove that
every limit leaf of ${\cal L}$ is stable for the Jacobi operator.
A simple but important consequence of this result is that the set
of stable leaves of ${\cal L}$ has the structure of a lamination.

\noindent{\it Mathematics Subject Classification:} Primary 53A10,
   Secondary 49Q05, 53C42

\noindent{\it Key words and phrases:} Minimal hypersurface, constant mean
curvature, stability, minimal lamination, CMC lamination, Jacobi function.
\end{abstract}

\section{Introduction.}
In this paper we prove that given a codimension one lamination ${\cal L}$ in a
Riemannian manifold~$N$, whose leaves have a fixed constant mean curvature
(minimality is included), then every limit leaf $L$ of ${\cal L}$ is stable with
respect to the Jacobi operator. Our result is motivated by a partial result of Meeks
and Rosenberg in Lemma~A.1 in~\cite{mr13}, where they proved the stability of $L$
under the constraint that the holonomy representation on any compact subdomain
$\Delta \subset L$ has subexponential growth (i.e., the covering space
$\widetilde{\Delta }$ of $\Delta $ corresponding to the kernel of the holonomy
representation has subexponential area growth). In general, if we assume stability
for a covering space $\widetilde{M}$ of a constant mean curvature (CMC) hypersurface
$M$ in~$N$ and for any connected compact domain $\Delta \subset M$ the related
restricted covering $\widetilde{\Delta}\to \Delta$ has subexponential area growth,
then $M$ is also stable, see Lemma~6.2 in~\cite{mpr13} for a proof using cutoff
functions. However, if the area growth of the covering is exponential over some
compact domain in $M$, then the stability of $\widetilde{M}$ does not imply the
stability of $M$, as can be seen in the  example described in the next paragraph.
The existence of this example makes it clear that the application in~\cite{mr13} of
cutoff functions used to prove the stability of a limit leaf $L$   with holonomy of
subexponential growth cannot be applied to case when the holonomy representation of
$L$ has exponential growth.

Consider a compact surface $\Sigma $ of genus at least two endowed with a metric $g$
of constant curvature $-1$, and a smooth function $f\colon \R \to (0,1 ]$ with
$f(0)=1$ and $-\frac{1}{8}<f''(0)<0$. Then in the warped product metric $f^2\, g+
dt^2$ on $\Sigma \times \R $, each slice $M_c=\Sigma \times \{ c\} $ is a CMC
surface of mean curvature $-\frac{f'(c)}{f(c)}$ oriented by the unit vector field
$\frac{\partial }{\partial t}$, and the stability operator on the totally geodesic
(hence minimal) surface $M_0=\Sigma \times \{ 0\} $ is $L=\Delta
+\mbox{Ric}(\frac{\partial }{\partial t})=\Delta -2f''(0)$, where $\Delta $ is the
laplacian on $M_0$ with respect to the induced metric $f(0)^2g=g$ and Ric denotes
the Ricci curvature of $f^2\, g+ dt^2$. The first eigenvalue of $L$ in the (compact)
surface $M_0$ is $2f''(0)$, hence $M_0$ is unstable as a minimal surface. On the
other hand, the universal cover $\widetilde{M}_0$ of $M_0$ is the hyperbolic plane.
Since the first eigenvalue of the Dirichlet problem for the laplacian in
$\widetilde{M}_0$ is $\frac{1}{4}$, we deduce that the first eigenvalue of the
Dirichlet problem for the Jacobi operator on $\widetilde{M}_0$ is
$\frac{1}{4}+2f''(0)>0$. Thus, $\widetilde{M}_0$ is an immersed stable minimal
surface. Similarly, for $c$ sufficiently small, the CMC surface $M_c$ is unstable
but its related universal cover is stable.

\section{The statement and proof of the main theorem.}
In order to help understand the results described in this paper, we
make the following definitions.

\begin{definition} {\rm
Let $M$ be a complete, embedded hypersurface in a manifold $N$. A
point $p\in N$ is a {\it limit point} of $M$ if there exists a
sequence $\{p_n\}_n\subset M$ which diverges to infinity on $M$ with
respect to the intrinsic Riemannian topology on $M$ but converges in
$N$ to $p$ as $n\to \infty$. Let $L(M)$ denote the set of all limit
points of $M$ in $N$. In particular, $L(M)$ is a closed subset of
$N$ and $\overline{M} -M \subset L(M)$, where $\overline{M}$ denotes
the closure of $M$.}
\end{definition}

\begin{definition}
\label{deflamination} {\rm A {\it codimension one lamination} of a Riemannian
manifold $N^{n+1}$ is the union of a collection of pairwise disjoint, connected,
injectively immersed hypersurfaces, with a certain local product structure. More
precisely, it is a pair $({\mathcal L},{\mathcal A})$ satisfying:
\begin{enumerate}
\item ${\mathcal L}$ is a closed subset of $N$;
\item ${\mathcal A}=\{ \varphi _{\be }\colon \D ^n\times (0,1)\to
U_{\be }\} _{\be }$ is a collection of coordinate charts of $N$
(here $\D ^n$ is the open unit ball in $\R^n$, $(0,1)$ the open unit
interval and $U_{\be }$ an open subset of $N$);
\item For each $\be $, there exists a closed subset $C_{\be }$ of
$(0,1)$ such that $\varphi _{\be }^{-1}(U_{\be }\cap {\mathcal
L})=\D ^n\times C_{\be }$.
\end{enumerate}
We will simply denote laminations by ${\mathcal L}$, omitting the
charts $\varphi _{\be }$ in ${\mathcal A}$. A lamination ${\mathcal
L}$ is said to be a {\it foliation of $N$} if ${\mathcal L}=N$.
Every lamination ${\mathcal L}$ naturally decomposes into a
collection of disjoint connected hypersurfaces, called the {\it
leaves} of ${\mathcal L}$. As usual, the regularity of ${\mathcal
L}$ requires the corresponding regularity on the change of
coordinate charts. Note that if $\Delta \subset {\cal L}$ is any
collection of leaves of ${\cal L}$, then the closure of the union of
these leaves has the structure of a lamination within ${\cal L}$,
which we will call a {\it sublamination.} }
\end{definition}

\begin{definition}\label{definition}
{\rm For $H\in \R $, an {\it $H$-hypersurface} $M$ in a Riemannian manifold $N$ is a
codimension one submanifold of constant mean curvature $H$. A codimension one
{$H$-lamination} $\cal{L}$ of $N$ is a collection of immersed (not necessarily
injectively) $H$-hypersurfaces $\{L_\alpha\}_{\alpha\in I}$, called the {\it leaves}
of ${\cal L}$, satisfying the following properties.
\begin{enumerate}
\item ${\cal L}=\bigcup_{\alpha\in I}\{L_\alpha \}$ is a closed subset of
$N$.
\item If $H=0$, then $\lc$ is a lamination of $N$. In this case, we also call ${\cal
L}$ a {\it minimal lamination.}
\item If $H\neq 0$, then given a leaf $L_\alpha$ of $\cal{L}$
 and given a small disk $\Delta \subset L_{\alpha }$, there exists an $\ve
>0$ such that if $(q,t)$ denote the normal coordinates for $\exp _q(t\eta _q)$
(here $\exp $ is the exponential map of $N$ and $\eta $ is the unit normal vector
field to $L_{\a}$ pointing to the mean convex side of $L_{\a }$), then:
\begin{enumerate}
\item The exponential map $\exp \colon U(\Delta ,\ve )=\{ (q,t)\ | \ q\in \mbox{Int}(\Delta ), t\in (-\ve ,\ve )\} $
is a submersion.
\item The inverse image $\exp^{-1}({\cal L})\cap \{ q\in \mbox{Int}(\Delta ), t\in [0,\ve )\} $ is a lamination
of $U(\Delta ,\ve $).
\end{enumerate}
\end{enumerate}}
\end{definition}

The reader not familiar with the subject of minimal or CMC laminations should think
about a geodesic $\g$ on a Riemannian surface. If $\g$ is complete and embedded (a
one-to-one immersion), then its closure is a geodesic lamination $\lc$ of the
surface. When the geodesic $\g$ has no accumulation points, then it is proper.
Otherwise, there pass complete embedded geodesics in $\lc$ through the accumulation
points of $\g$ forming the leaves of $\lc$. A similar result is true for a complete,
embedded $H$-hypersurface of locally bounded second fundamental form (bounded in
compact extrinsic balls) in a Riemannian manifold $N$, i.e., the closure of a
complete, embedded $H$-hypersurface of locally bounded second fundamental form has
the structure of an $H$-lamination of $N$. For the sake of completeness, we now give
the proof of this elementary fact in the case $H\neq 0$ (see the beginning of
Section~1 in~\cite{mr8} for the proof in the minimal case).

Consider a complete, embedded $H$-hypersurface $M$ with locally bounded second
fundamental form in a manifold $N$. Choose a limit point $p$ of $M$ (if there are no
such limit points, then $M$ is proper and it is an $H$-lamination of $N$ by itself),
i.e., $p$ is the limit in $N$ of a sequence of divergent points $p_n$ in $M$. Since
$M$ has bounded second fundamental form near $p$ and $M$ is embedded, then for some
small $\ve >0$, a subsequence of the intrinsic $\ve$-balls $B_M(p_n, \ve)$ converges
to an embedded $H$-ball $B(p,\ve)\subset N$ of intrinsic radius $\ve$ centered at
$p$. Since $M$ is embedded, any two such limit balls, say $B(p,\ve)$, $B'(p,\ve)$,
do not intersect transversally. By the maximum principle for $H$-hypersurfaces, we
conclude that if a second ball $B'(p,\ve)$ exists, then $B(p,\ve)$, $B'(p,\ve)$ are
the only such limit balls and they are oppositely oriented at $p$.

Now consider any sequence of embedded balls $E_n$ of the form $B(q_n,\frac{\ve}{4})$
such that $q_n$ converges to a point in $B(p,\frac{\ve}{2})$ and such that $E_n$
locally lies on the mean convex side of $B(p,\ve)$. For $\ve$ sufficiently small and
for $n$, $m$ large, $E_n$ and $E_m$ must be graphs over domains in $B(p,\ve)$ such
that when oriented as graphs, they have the same mean curvature. By the maximum
principle, the graphs $E_n$ and $E_m$ are disjoint or equal. It follows that near
$p$ and on the mean convex side of $B(p,\ve)$, $\overline{M}$ has the structure of a
lamination with leaves of the same constant mean curvature as $M$. This proves that
$\overline{M}$ has the structure of an $H$-lamination of codimension one.

\begin{definition}
{\rm Let ${\cal L}$ be a codimension one $H$-lamination of a manifold $N$ and $L$ be
a leaf of~${\cal L}$. We say that $L$ is a {\it limit leaf} if $L$ is contained in
the closure of ${\cal L}-L$. }
\end{definition}
We claim that a leaf $L$ of a codimension one $H$-lamination ${\cal L}$ is a limit
leaf if and only if for any point $p\in L$ and any sufficiently small intrinsic ball
$B\subset L$ centered at $p$, there exists a sequence of pairwise disjoint balls
$B_n$ in leaves $L_n$ of ${\cal L}$ which converges to $B$ in $N$ as $n\to \infty $,
such that each $B_n$ is disjoint from $B$. Furthermore, we also claim that the
leaves $L_n$ can be chosen different from $L$ for all~$n$. The implication where one
assumes that $L$ is a limit leaf of ${\cal L}$ is clear. For the converse, it
suffices to pick a point $p\in L$ and prove that $p$ lies in the closure of ${\cal
L}-L$. By hypothesis, there exists a small intrinsic ball $B\subset L$ centered at
$p$ which is the limit in $N$ of pairwise disjoint balls $B_n$ in leaves $L_n$ of
${\cal L}$, as $n\to \infty $. If $L_n\neq L$ for all $n\in \N$, then we have done.
Arguing by contradiction and after extracting a subsequence, assume $L_n=L$ for all
$n\in \N$. Choosing points $p_n\in B_n$ and repeating the argument above with $p_n$
instead of $p$, one finds pairwise disjoint balls $B_{n,m}\subset L$ which converge
in $N$ to $B_n$ as $m \to \infty $. Note that for $(n_1,m_1)\neq (n_2,m_2)$, the
related balls $B_{n_1,m_1}, B_{n_2,m_2}$ are disjoint. Iterating this process, we
find an uncountable number of such disjoint balls on $L$, which contradicts that $L$
admits a countable basis for its intrinsic topology.

\begin{definition}
{\rm A minimal hypersurface $M\subset N$ of dimension $n$ is said to
be {\it stable} if for every compactly supported normal variation of
$M$, the second variation of area is non-negative. If $M$ has
constant mean curvature $H$, then $M$ is said to be {\it stable} if
the same variational property holds for the functional $A-nHV$,
where $A$ denotes area and $V$ stands for oriented volume. A {\it
Jacobi function} $f\colon M \to \R $ is a solution of the equation
$\Delta f+|A|^2f+\mbox{Ric}(\eta )f=0$ on $M$; if $M$ is two-sided,
then the stability of $M$ is equivalent to the existence of a
positive Jacobi function on $M$ (see Fischer-Colbrie~\cite{fi1}). }
\end{definition}

The proof of the next theorem is motivated by a well-known
application of the divergence theorem to prove that every compact
domain in a leaf of an oriented, codimension one minimal foliation
in a Riemannian manifold is area-minimizing in its relative
$\Z$-homology class. For other related applications of the
divergence theorem, see~\cite{ror1}.

\begin{theorem}
\label{stabthm} The limit leaves of a codimension one $H$-lamination of a Riemannian
manifold are stable.
\end{theorem}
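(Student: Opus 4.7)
The plan is to construct a positive Jacobi function on the limit leaf $L$, from which stability follows by the criterion recalled in the definition of stability above. If $L$ is one-sided in $N$, I first pass to the orientable double cover of a tubular neighborhood of $L$: the lamination lifts to a codimension-one $H$-lamination in which the lifted copy $\widetilde{L}$ is a two-sided limit leaf, and stability of $\widetilde{L}$ implies stability of $L$ because the Jacobi quadratic form of any compactly supported test function on $L$ equals one half of that of its pullback to $\widetilde{L}$. So I may assume $L$ is two-sided.

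By the claim established in the paragraph preceding the theorem, there is a sequence of leaves $L_n$ of $\lc$, all different from $L$, converging to $L$ in $N$. The locally uniform bound on the second fundamental form built into a codimension-one $H$-lamination near a leaf lets me write, on any relatively compact open subdomain $\Omega\subset L$ and for $n$ large enough, a piece of $L_n$ as the normal graph $\{\exp_q(u_n(q)\eta_q):q\in\Omega\}$ of a function $u_n\in C^{2,\a}(\overline{\Omega})$ with $u_n\to 0$ in $C^{2,\a}$. Since $L_n\cap L=\varnothing$, I may take $u_n>0$. Expanding the prescribed mean curvature equation $H(u_n)=H$ about $u\equiv 0$ yields
\[
Ju_n=Q(u_n),
\]
where $J=\Delta+|A|^2+\Ric(\eta)$ is the Jacobi operator and $Q$ collects the quadratic and higher terms in $(u_n,\nabla u_n,\nabla^2 u_n)$, which tend to zero in $C^{0,\a}_{\text{loc}}$ as $u_n\to 0$.

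Fix $p_0\in L$ and normalize $v_n:=u_n/u_n(p_0)>0$ so that $v_n(p_0)=1$. Dividing the identity above by $u_n(p_0)\to 0$ gives $Jv_n\to 0$ in $C^{0,\a}_{\text{loc}}$. The Harnack inequality for positive solutions of the perturbed linear operator provides locally uniform two-sided bounds on the $v_n$, and Schauder estimates upgrade these to $C^{2,\a}_{\text{loc}}$ bounds. A diagonal extraction along an exhaustion of $L$ then delivers a $C^{2,\a}$ function $v\geq 0$ on all of $L$ satisfying $Jv=0$ and $v(p_0)=1$.

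Finally, $v$ cannot have an interior zero: rewriting $Jv=0$ as $(-\Delta+C-|A|^2-\Ric(\eta))v=Cv\geq 0$, with $C$ chosen to dominate $|A|^2+\Ric(\eta)$ on a compact neighborhood of a putative zero, the operator on the left has non-negative zeroth-order coefficient, so the strong maximum principle forces the non-negative $v$ to vanish identically near the putative zero and hence, by connectedness, on all of $L$, contradicting $v(p_0)=1$. Therefore $v>0$ on $L$ and $L$ is stable. The main obstacle is the graphical representation of $L_n$ over arbitrary compact subdomains of $L$ together with the uniform control of the nonlinear remainder $Q(u_n)$; both rest on the locally uniform second fundamental form bounds that accompany the codimension-one $H$-lamination structure.
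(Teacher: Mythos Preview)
Your argument is correct and takes a genuinely different route from the paper's. The paper argues by contradiction: assuming an unstable compact subdomain $\Delta\subset L$, it builds two transverse vector fields on a one-sided neighborhood of $\Delta$ --- one as the unit normal to a foliation interpolated from the nearby lamination leaves (whose mean curvature equals $H$ to first order in $t$, by Assertion~\ref{ass1.5}), the other as the unit normal to the level sets of the flow by the first Dirichlet eigenfunction of $J$ (whose mean curvature strictly exceeds $H$ to first order) --- and obtains incompatible inequalities by applying the divergence theorem to both. Your approach instead produces a positive Jacobi function on $L$ directly, by normalizing the graphing functions of nearby leaves and extracting a limit via Harnack and Schauder estimates; this is the classical route and uses the Fischer--Colbrie criterion, which the paper's proof bypasses entirely. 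What the divergence-theorem argument buys is a self-contained variational proof that avoids elliptic regularity theory; what your argument buys is brevity and a concrete limiting object (the Jacobi function) that can be useful in applications.

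One minor point of exposition: as written, the claim $Jv_n\to 0$ in $C^{0,\a}_{\mathrm{loc}}$ precedes the Harnack bound it implicitly needs, since controlling $Q(u_n)/u_n(p_0)$ requires a bound on $\|u_n\|_{C^2}/u_n(p_0)$. The standard fix --- which your phrase ``perturbed linear operator'' suggests you already intend --- is to write $H(u_n)-H(0)=0$ via the integral mean value theorem as $L_n u_n=0$ for a linear elliptic operator $L_n$ whose coefficients converge to those of $J$, and then apply Harnack directly to the positive solutions $v_n$ of $L_n v_n=0$. With that reordering the argument is complete.
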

\begin{proof}
We will assume that the dimension of the ambient manifold $N$ is
three in this proof; the arguments below can be easily adapted to
the $n$-dimensional setting. The first step in the proof is the
following result.

\begin{assertion}
\label{ass1.5} Suppose $\overline{D}(p,r)$ is a compact, embedded $H$-disk in $N$
with constant mean curvature $H$ (possibly negative), intrinsic diameter $r>0$ and
center $p$, such that there exist global normal coordinates $(q,t)$ based at points
$q\in \overline{D}(p,r)$, with $t\in [0,\ve ]$. Suppose that $T\subset [0,\ve ]$ is
a closed disconnected set with zero as a limit point and for each $t\in T$, there
exists a function $f_t:\overline{D}(p,r)\to [0,\ve ]$ such that the normal graphs
$q\mapsto \exp _q(f_t(q)\eta (q))$ define pairwise disjoint $H$-surfaces with
$f_t(p)=t$, where $\eta $ stands for the oriented unit normal vector field to
$\overline{D}(p,r)$. For each component $(t_{\a }, s_{\a })$ of $[0,\ve ]-T$ with
$s_{\a }<\ve $, consider the interpolating graphs $q\mapsto \exp _q(f_t(q)\eta
(q))$, $t\in [t_{\a },s_{\a }]$, where
\[
f_t= f_{t_{\a}}+(t-t_{\a })\frac{f_{s_{\a }}-f_{t_{\a }}}{s_{\a }-t_{\a }}.
\]
(See Figure~\ref{figure1}).
Then, the mean curvature functions $H_t$ of the graphs of $f_t$ satisfy
\[
\lim _{t\to 0^+}\frac{H_t(q)-H}{t}=0\quad \mbox{for all }q\in D(p,\ve /2).
\]
\end{assertion}
\begin{figure}
\begin{center}
\includegraphics[width=10.8cm]{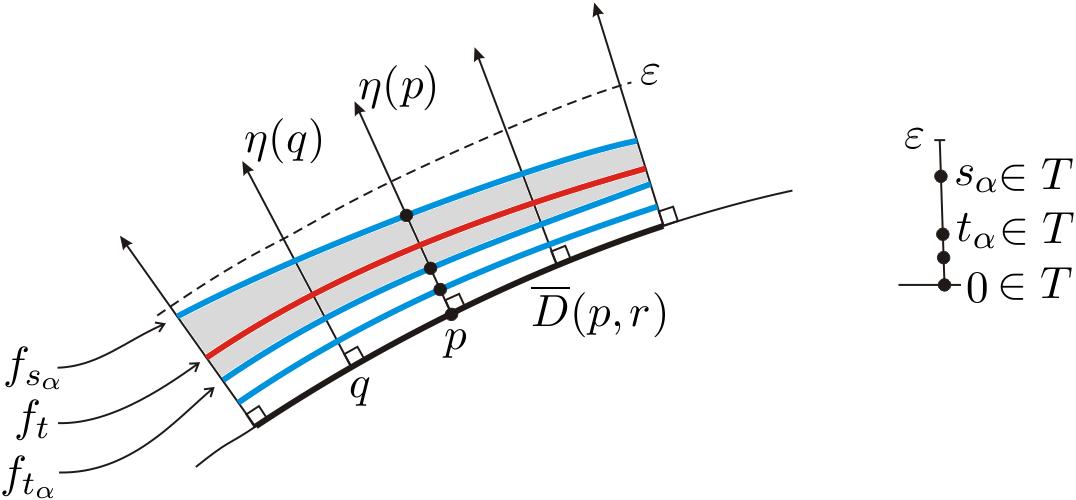}
\caption{The interpolating graph of $f_t$ between the $H$-graphs of $f_{t_{\a
}},f_{s_{\a }}$.}\label{figure1}
\end{center}
\end{figure}
%\begin{figure}
%\centerbmp{10.8cm}{5cm}{figure1.bmp} \caption{The interpolating
%graph of $f_t$ between the $H$-graphs of $f_{t_{\a }},f_{s_{\a }}$.}
%\label{figure1}
%\end{figure}
{\it Proof of Assertion~\ref{ass1.5}.} Reasoning by contradiction,
suppose there exists a sequence $t_n\in [0,\ve ]-T$, $t_n\searrow
0$, and points $q_n\in \overline{D}(p,r/2)$, such that
$|H_{t_n}(q_n)-H|>Ct_n$ for some constant $C>0$. Let $(t_{\a
_n},s_{\a _n})$ be the component of $[0,\ve ]-T$ which contains
$t_n$. Then, we can rewrite $f_{t_n}$ as
\[
f_{t_n}=t_n\left[ \frac{t_{\a _n}}{t_n}\frac{f_{t_{\a _n}}}{t_{\a _n}}+
\left( 1-\frac{t_{\a _n}}{t_n}\right)
\frac{f_{s_{\a _n}}-f_{t_{\a _n}}}{s_{\a _n}-t_{\a _n}}\right] .
\]
After extracting a subsequence, we may assume that as $n\to \infty
$, the sequence of numbers $\frac{t_{\a _n}}{t_n}$ converges to some
$A\in [0,1]$, and the sequences of functions $\frac{f_{t_{\a
_n}}}{t_{\a _n}},\frac{f_{s_{\a _n}}-f_{t_{\a _n}}}{s_{\a _n}-t_{\a
_n}}$ converge smoothly to Jacobi functions $F_1,F_2$ on
$\overline{D}(p,r/2)$, respectively. Now consider the normal
variation of $\overline{D}(p,r/2)$ given by
\[
\widetilde{\psi }_t(q)
=\exp _q\left( t[AF_1+(1-A)F_2](q)\eta (q)\right) ,
\]
for $t>0$ small. Since $AF_1+(1-A)F_2$ is a Jacobi function, the
mean curvature $\widetilde{H}_t$ of $\widetilde{\psi _t}$ is
$\widetilde{H}_t=H+{\cal O}(t^2)$, where ${\cal O}(t^2)$ stands for
a function satisfying $t{\cal O}(t^2)\to 0$ as $t\to 0^+$. On the
other hand, the normal graphs of $f_{t_n}$ and of
$t_n(AF_1+(1-A)F_2)$ over $\overline{D}(p,r/2)$ can be taken
arbitrarily close in the $C^4$-norm for $n$ large enough, which
implies that their mean curvatures $H_{t_n},\widetilde{H}_{t_n}$ are
$C^2$-close. This is a contradiction with the assumed decay of
$H_{t_n}$ at $q_n$.
{\hfill\penalty10000\raisebox{-.09em}{$\Box$}\par\medskip}

We now continue the proof of the theorem. Let $L$ be a limit leaf of an
$H$-lamination ${\cal L}$ of a manifold $N$ by hypersurfaces. If $L$ is one-sided,
then we consider the two-sided 2:1 cover $\widetilde{L}\to L$ and pullback  the
$H$-lamination ${\cal L}$ to a small neighborhood of the zero section
$\widetilde{L}_0$ of the normal bundle $\widetilde{L}^{\perp}$ to $\widetilde{L}$
($\widetilde{L}_0$ can be identified with $\widetilde{L}$ itself). In this case, we
will prove that $\widetilde{L}_0$ is stable, which in particular implies stability
for $L$, see Remark~\ref{remark1}. Hence, in the sequel we will assume $L$ is
two-sided.

Arguing by contradiction, suppose there exists an unstable compact
subdomain $\Delta \subset L$ with non-empty smooth boundary
$\partial \Delta $. Given a subset $A\subset \Delta $ and
$\ve >0$ sufficiently small, we define
\[
A^{\perp ,\ve }=\{ \exp _q(t\eta (q))\ | \ q\in A,\ t\in [0,\ve ]\}
\]
to be the one-sided vertical $\ve $-neighborhood of $A$, written in
normal coordinates $(q,t)$ (here we have picked the unit normal
$\eta $ to $L$ such that $L$ is a limit of leaves of ${\cal L}$ at
the side $\eta $ points into). Since ${\cal L}$ is a lamination and
$\Delta $ is compact, there exists $\de  \in (0,\ve )$ such that the
following property holds:
\par
\vspace{.2cm} \noindent {\em $(\star )$ Given an intrinsic disk
$D(p,\de )\subset L$ centered at a point $p\in \Delta $ with radius
$\de $, and given a point $x\in {\cal L}$ which lies in $D(p,\de
)^{\perp ,\ve /2}$, then there passes a disk $D_x\subset {\cal L}$
through $x$, which is entirely contained in $D(p,\de )^{\perp ,\ve
}$, and $D_x$ is a normal graph over $D(p,\de )$. }
\par
\vspace{.2cm} Fix a point $p\in \Delta $ and let $x\in {\cal L}\cap
\{ p\} ^{\perp, \ve /2}$ be the  point above $p$ with greatest
$t$-coordinate. Consider the disk $D_x$ given by property $(\star
)$, which is the normal graph of a function $f_x$ over $D(p,\de )$.
Since $\Delta $ is compact, $\ve $ can be assumed to be small enough
so that the closed region given in normal coordinates by $U(p,\de
)=\{ (q,t)\ | \ q\in D(p,\de ), 0\leq t\leq f_x(q)\} $ intersects
${\cal L}$ in a closed collection of disks $\{ D(t)\ | \ t\in T\} $,
each of which is the normal graph over $D(p,\de )$ of a function
$f_t\colon D(p,\de )\to [0,\ve )$ with $f_t(p)=t$, and $T$ is a
closed subset of $[0,\ve /2]$, see Figure~\ref{figure2}.
\begin{figure}
\begin{center}
\includegraphics[width=7.5cm]{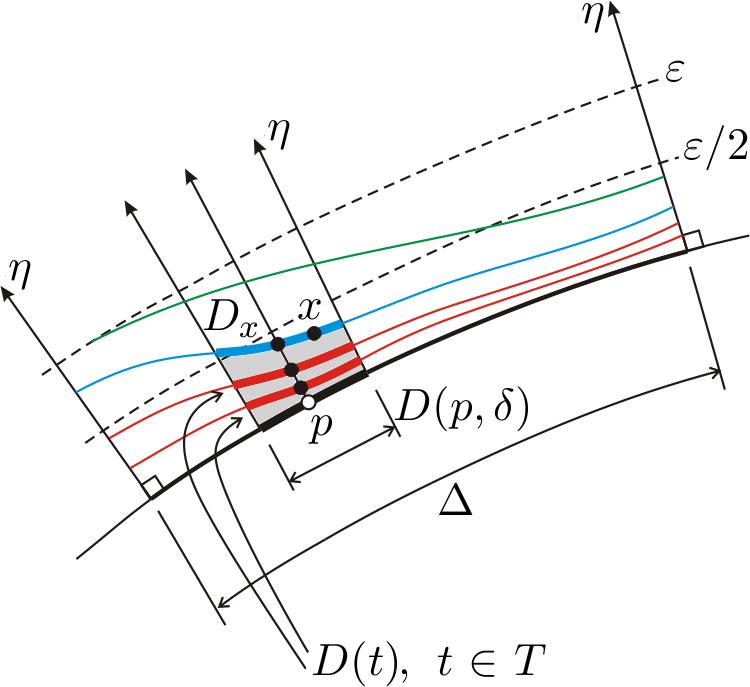}
\caption{The shaded region between $D_x$ and $D(p,\de )$ corresponds to $U(p,\de
)$.}\label{figure2}
\end{center}
\end{figure}
%\begin{figure}
%\centerbmp{7.5cm}{6.9cm}{figure2.bmp} \caption{The shaded region
%between $D_x$ and $D(p,\de )$ corresponds to $U(p,\de )$.}
%\label{figure2}
%\end{figure}
We now foliate the region $U(p,\de )-\bigcup _{t\in T}D(t)$ by
interpolating the graphing functions as we did in
Assertion~\ref{ass1.5}. Consider the union of all these locally
defined foliations ${\cal F}_p$ with $p$ varying in $\Delta $. Since
$\Delta $ is compact, we find $\ve _1\in (0,\ve /2)$ such that
the one-sided normal neighborhood $\Delta ^{\perp ,\ve _1}
\subset \bigcup _{p\in \Delta }{\cal F}_p$
of $\Delta $ is foliated by surfaces which are
portions of disks in the locally defined foliations ${\cal F}_p$. Let
${\cal F}(\ve _1)$ denote this foliation of $\Delta ^{\perp ,\ve _1}$.
By Assertion~\ref{ass1.5}, the mean curvature function of the foliation
${\cal F}(\ve _1)$ viewed locally as a function $H(p,t)$ with $p\in
\Delta $ and $t\in [0,\ve _1]$, satisfies
\begin{equation}
\label{eq:1}
\lim _{t\to 0^+}\frac{H(p,t)-H}{t}=0,\quad \mbox{for all }p\in \Delta .
\end{equation}

On the other hand since $\Delta $ is unstable, the first eigenvalue
$\l _1$ of the Jacobi operator $J$ for the Dirichlet problem on
$\Delta $, is negative. Consider a positive eigenfunction $h$ of $J$
on $\Delta $ (note that $h=0$ on $\partial \Delta $). For $t\geq 0$
small and $q\in \Delta $, $\exp _q(th(q)\eta(q))$ defines a family
of  surfaces $\{ \Delta (t)\} _t$ with $\Delta (t)\subset \Delta
^{\perp ,\ve }$ and the mean curvature $\widehat{H}_t$ of $\Delta
(t)$ satisfies
\begin{equation}
\label{eq:2} \left. \frac{d}{dt}\right|_{t=0}\widehat{H}_t=Jh=-\l
_1h>0\quad \mbox{on the interior of }\Delta .
\end{equation}
Let $\Omega (t)$ be the compact region of $N$ bounded by $\Delta
\cup \Delta (t)$ and foliated away from $\partial \Delta$ by the
surfaces $\Delta (s)$, $0\leq s\leq t$. Consider the smooth unit
vector field $V$ defined at any point $x\in \Omega (t)-\partial
\Delta$ to be the unit normal vector to the unique leaf $\Delta (s)$
which passes through~$x$, see Figure~\ref{figure3}.
\begin{figure}
\begin{center}
\includegraphics[width=9.2cm]{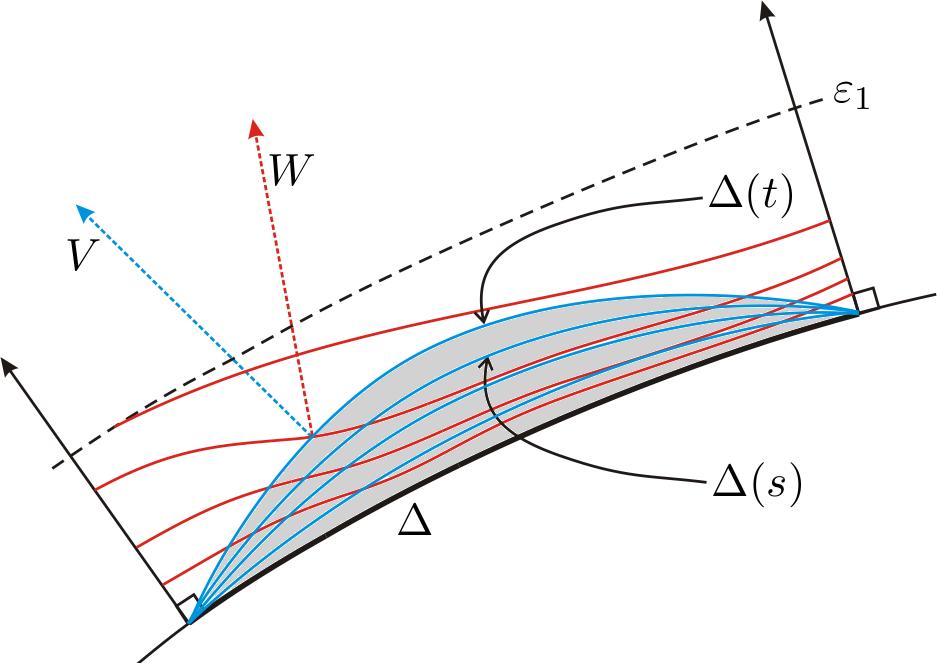}
\caption{The divergence theorem is applied in the shaded region $\Omega (t)$ between
$\Delta $ and $\Delta (t)$.} \label{figure3}
\end{center}
\end{figure}
%\begin{figure}
%\centerbmp{9.2cm}{6.5cm}{figure3.bmp} \caption{The divergence
%theorem is applied in the shaded region $\Omega (t)$ between $\Delta
%$ and $\Delta (t)$.} \label{figure3}
%\end{figure}
Since the divergence of $V$ at $x\in
\Delta (s)\subset \Omega (t)$ equals $-2\widehat{H}_s$ where
$\widehat{H}_s$ is the mean curvature of $\Delta (s)$ at $x$, then
(\ref{eq:2}) gives
\[
\mbox{div}(V)=-2\widehat{H}_s=-2H+2\l _1sh+{\cal O}(s^2) \quad
\mbox{on }\Delta (s)
\]
for $s>0$ small. It follows that there exists a positive constant
$C$ such that for $t$ small,
\begin{equation}
\label{eq:3}
\int _{\Omega (t)}\mbox{div}(V)=-2H\mbox{Vol}
(\Omega (t))+2\l _1\int _{\Omega (t)}sh+{\cal O}(t^2)<
-2H\mbox{Vol}(\Omega (t))-Ct.
\end{equation}

Since the foliation ${\cal F}(\ve _1)$ has smooth leaves with
uniformly bounded second fundamental form, then the unit normal
vector field $W$ to the leaves of ${\cal F}(\ve _1)$ is Lipschitz on
$\Delta ^{\perp ,\ve _1}$ and hence, it is Lipschitz on $\Omega
(t)$. Since $W$ is Lipschitz, its divergence is defined almost
everywhere in $\Omega (t)$ and the divergence theorem holds in this
setting. Note that the divergence of $W$ is smooth in the regions of
the form $U(p,\de )-\bigcup _{t\in T}D(t)$ where it is equal to $-2$ times
the mean curvature of the leaves of ${\cal F}_p$. Also, the mean
curvature function of the foliation is continuous on ${\cal F}(\ve _1)$
(see Assertion~\ref{ass1.5}). Hence, the divergence of $W$ can be
seen to be a continuous function on $\Omega (t)$ which equals $-2H$
on the leaves $D(t)$, and by Assertion~\ref{ass1.5}, div$(W)$
converges to the constant $-2H$ as $t\to 0$ to first order. Hence,
\begin{equation}
\label{eq:4}
\int _{\Omega (t)}\mbox{div}(W)>-2H \mbox{Vol}(\Omega (t))-Ct,
\end{equation}
for any $t>0$ sufficiently small.

Applying the divergence theorem to $V$ and $W$ in $\Omega (t)$ (note that
$W=V$ on $\Delta $), we obtain the following two inequalities:
\[
\int _{\Omega (t)}\mbox{div}(V)=\int _{\Delta (t)}\langle V,\eta
(t)\rangle -\int _{\Delta }\langle V,\eta \rangle =
\mbox{Area}(\Delta (t))-\mbox{Area}(\Delta ),
\]
\[
\int _{\Omega (t)}\mbox{div}(W)=\int _{\Delta (t)}\langle W,\eta (t)
\rangle -\int _{\Delta }\langle V,\eta \rangle <
\mbox{Area}(\Delta (t))-\mbox{Area}(\Delta ),
\]
where $\eta (t)$ is the exterior unit vector field to $\Omega (t)$ on
$\Delta (t)$. Hence, $\int _{\Omega (t)}\mbox{div}(W)<\int _{\Omega
(t)}\mbox{div}(V)$. On the other hand, choosing $t$ sufficiently
small such that both inequalities (\ref{eq:3}) and (\ref{eq:4})
hold, we have $\int _{\Omega (t)}\mbox{div}(W)> \int _{\Omega
(t)}\mbox{div}(V)$. This contradiction completes the proof of the
theorem.
\end{proof}

\begin{remark}
\label{remark1} {\rm The proof of the theorem shows that given any two-sided cover
$\widetilde{L}$ of a limit leaf $L$ of ${\cal L}$ as described in the statement of
the theorem, then $\widetilde{L}$ is stable. This follows by lifting ${\cal L}$ to a
neighborhood $U(\widetilde{L})$ of $\widetilde{L}$ in its normal bundle, considered
to be the zero section in $U(\widetilde{L})$. In the case of non-zero constant mean
curvature hypersurfaces, $L$ is already two-sided and then stability is equivalent
to the existence of a positive Jacobi function. However, in the minimal case where a
hypersurface $L$ may be one-sided, this observation concerning stability of
$\widetilde{L}$ is generally a stronger property; for example, the projective plane
contained in projective three-space is a totally geodesic surface which is area
minimizing in its $\Z_2$-homology class but its oriented two-sided cover is
unstable, see Ross~\cite{ro5} and also Ritor\'e and Ros~\cite{rr2}.}
\end{remark}

Next we give a useful and immediate consequence of Theorem~\ref{stabthm}. Let ${\cal
L}$ be a codimension one $H$-lamination of a manifold $N$. We will denote by
$\mbox{\rm Stab}(\lc),\,\mbox{\rm Lim}(\lc)$ the collections of stable leaves and
 limit leaves of $\lc$, respectively. Note that $\mbox{\rm
Lim}(\lc)$ is a closed set of leaves and so, it is a sublamination
of ${\cal L}$.

\begin{corollary}\label{corrs}
Suppose that $N$ is a not necessarily complete Riemannian manifold and ${\cal L}$ is
an $H$-lamination of $N$ with leaves of codimension one. Then, the closure of any
collection of its stable leaves has the structure of a sublamination of ${\cal L}$,
all of whose leaves are stable. Hence,  ${\mbox{\rm Stab}(\lc)}$ has the structure
of a minimal lamination of $N$ and   $\mbox{\rm Lim}(\lc)\subset \mbox{\rm
Stab}(\lc)$ is a sublamination.

\end{corollary}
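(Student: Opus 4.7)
The plan is to reduce the corollary almost entirely to Theorem~\ref{stabthm}, with the only real content being the verification that taking closures of collections of leaves preserves the lamination structure inside $\lc$. Let $\mathcal{S}$ be any collection of stable leaves of $\lc$, and set $\mathcal{S}'=\overline{\bigcup_{L\in\mathcal{S}} L}$, with the closure taken in $N$. Since $\lc$ itself is closed in $N$ (by part~1 of Definition~\ref{definition}) and $\bigcup_{L\in\mathcal{S}} L\subset \lc$, we have $\mathcal{S}'\subset \lc$. Hence $\mathcal{S}'$ is a union of leaves of $\lc$, and the observation at the end of Definition~\ref{deflamination} (together with the local product description in the $H\neq 0$ case spelled out in Definition~\ref{definition}~(3)) gives that $\mathcal{S}'$ inherits from $\lc$ the structure of a sublamination of $\lc$.

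Next I would check that every leaf $L'$ of $\mathcal{S}'$ is stable. There are two cases. If $L'\in\mathcal{S}$, stability is given. Otherwise, $L'\subset\mathcal{S}'-\bigcup_{L\in\mathcal{S}} L$, so every point $p\in L'$ is a limit of points in leaves of $\mathcal{S}$ distinct from $L'$; this is exactly the characterization of a limit leaf of the sublamination $\mathcal{S}'$ given in the paragraph following the definition of limit leaf. In particular $L'$ is a limit leaf of the $H$-lamination $\mathcal{S}'$, and Theorem~\ref{stabthm} (applied to $\mathcal{S}'$, not to $\lc$, since the theorem hypothesizes only a codimension one $H$-lamination in $N$) yields that $L'$ is stable.

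Applying this to $\mathcal{S}=\mbox{\rm Stab}(\lc)$, the previous paragraph gives that $\overline{\bigcup_{L\in\mbox{\rm Stab}(\lc)}L}$ is a sublamination of $\lc$ all of whose leaves are stable; by the maximality built into the definition of $\mbox{\rm Stab}(\lc)$, this sublamination equals $\mbox{\rm Stab}(\lc)$, so $\mbox{\rm Stab}(\lc)$ is already closed and is itself a sublamination of $\lc$. Finally, the inclusion $\mbox{\rm Lim}(\lc)\subset \mbox{\rm Stab}(\lc)$ is immediate from Theorem~\ref{stabthm}, and $\mbox{\rm Lim}(\lc)$ is a sublamination because it is a closed union of leaves (a leaf that is a limit of limit leaves is itself a limit leaf, by a short diagonal argument inside the local product charts).

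The only step with any content is the inheritance of lamination structure on $\mathcal{S}'$, and this is the ``main obstacle'' only in the bookkeeping sense: in the minimal case it is immediate from Definition~\ref{deflamination}, and in the $H\neq 0$ case one must invoke the local one-sided lamination structure from Definition~\ref{definition}~(3), noting that the mean-convex side $\eta$ is consistently determined by $\lc$ along each leaf so the charts restrict correctly to the closed subcollection $\mathcal{S}'$. Everything else is a direct citation of Theorem~\ref{stabthm}.
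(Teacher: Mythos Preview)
Your argument is correct and is exactly the ``immediate consequence'' the paper has in mind: the paper gives no proof of the corollary, relying on the remark at the end of Definition~\ref{deflamination} (that the closure of any collection of leaves is a sublamination) together with Theorem~\ref{stabthm}, and the sentence preceding the corollary already records that $\mbox{\rm Lim}(\lc)$ is a closed set of leaves and hence a sublamination. One small wording point: the implication ``$\mathcal{S}'\subset\lc$, hence $\mathcal{S}'$ is a union of leaves'' does not follow from the inclusion alone; it is part of what the cited observation in Definition~\ref{deflamination} asserts (via the local product structure), so you might phrase that sentence to make clear you are invoking that observation rather than deducing it from $\mathcal{S}'\subset\lc$.
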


\begin{remark}
\label{remark2}{\rm  Theorem~\ref{stabthm} and Corollary~\ref{corrs} have many
useful applications to the  geometry of embedded minimal and constant mean curvature
hypersurfaces in Riemannian manifolds. We refer the interested reader to the
survey~\cite{mpe2} by the first two authors and to our joint paper in~\cite{mpr19}
for some of these applications.

}
\end{remark}

\center{William H. Meeks, III at bill@math.umass.edu\\
Mathematics Department, University of Massachusetts, Amherst, MA
01003}
\center{Joaqu\'\i n P\'{e}rez at jperez@ugr.es\\
Department of Geometry and Topology, University of Granada, Granada,
Spain}
\center{Antonio Ros at aros@ugr.es\\
Department of Geometry and Topology, University of Granada, Granada,
Spain}

\bibliographystyle{plain}
\bibliography{bill}

\end{document}